\newcommand{\NN}{{\mathbb N}}
\newcommand{\RR}{{\mathbb R}}
\newcommand{\ZZ}{{\mathbb Z}}
\newcommand{\cA}{{\mathcal{A}}}
\newcommand{\cM}{{\mathcal{M}}}
\newcommand{\cU}{{\mathcal{U}}}
\newcommand{\cX}{{\mathcal{X}}}
\newcommand{\argmin}{\mathop{\mathrm{argmin}}}
\newcommand{\mindeg}{\mathop{\mathrm{mindeg}}}
\newtheorem{theorem}{Theorem}
\newtheorem{proposition}{Proposition}
\newtheorem{remark}{Remark}
\newtheorem{game}{Game}
\newtheorem{example}{Example}
\newtheorem{algorithm}{Algorithm}
\newcommand{\rulesep}{\unskip\ \vrule width 1pt\ }
\title{\Large \bf Graph-Based Controller Synthesis for Safety-Constrained, Resilient Systems}
\author{Matija~Buci\'{c}, Melkior~Ornik, and Ufuk~Topcu
\thanks{M.~Buci\'{c} is with ETH Z\"{u}rich. M.~Ornik and U.~Topcu are with the University of Texas at Austin. E-mails: \{matija.bucic@math.ethz.ch, mornik@ices.utexas.edu, utopcu@utexas.edu\}}
\thanks{This work was supported in part by grants FA8750-17-C-0087 from the Defense Advanced Research Projects Agency and 200021-175573 from the Swiss National Science Foundation.}
}
\begin{document}
\maketitle

\begin{abstract}
Resilience to damage, component degradation, and adversarial action is a critical consideration in design of autonomous systems. In addition to designing strategies that seek to prevent such negative events, it is vital that an autonomous system remains able to achieve its control objective even if the system partially loses control authority. While loss of authority limits the system's control capabilities, it may be possible to use the remaining authority in such a way that the system's control objectives remain achievable. In this paper, we consider the problem of optimal design for an autonomous system with discrete-time linear dynamics where the available control actions depend on adversarial input produced as a result of loss of authority. The central question is how to partition the set of control inputs that the system can apply in such a way that the system state remains within a safe set regardless of an adversarial input limiting the available control inputs to a single partition elements. We interpret such a problem first as a variant of a safety game, and then as a problem of existence of an appropriate edge labeling on a graph. We obtain conditions for existence and a computationally efficient algorithm for determining a system design and a control policy that preserve system safety. We illustrate our results on two examples: a damaged autonomous vehicle and a method of communication over a channel that ensures a minimal running digital sum.
\end{abstract}

\section{Introduction}
\label{secmot}
Controller's loss of authority over parts of an autonomous system may happen in many scenarios:
\begin{enumerate}[(a)]
\item \textit{System damage and component degradation.} An autonomous system operating for substantial periods of time in remote, unknown, or hostile environment will inevitably sustain damage or experience partial system failures over time due to malfunctions. Examples include unmanned aerial vehicles (UAVs) operating over contested territory \cite{RatSen04}, search-and-rescue robots \cite{Chaetal18}, and rovers performing missions on extraterrestrial surfaces \cite{Wasetal99}.
\item \textit{Hostile takeover.}
In a number of adversarial settings, the adversary will attempt to take over elements of the system and disturb its regular functions. A typical setting is that of attacks on computer networks \cite{Vat01} and power systems \cite{AmiGia12,Zhuetal14}, where, because of the vastness of the network and heterogeneity and physical distance between system elements, an adversarial agent may be able to penetrate a part of the system. Hostile takeover scenarios also include recent successful attacks resulting in loss of control over UAVs; see, e.g., \cite{Woletal14,HarGil16}.

\item \textit{User-responsive systems.} Settings where an automated controller is required to respond to (a priori unknown) user inputs in a particular way necessarily yield a part of the control authority to the user. Such scenarios include resource distribution in parallel computing \cite{Feietal90}, semantic web service composition \cite{Rodetal12}, and communication protocols \cite{Sha48}.
\end{enumerate}

In all of the above settings, it is critical to ensure that the autonomous system can perform its tasks regardless of external inputs that may affect the system. A standard method of ensuring continued functioning of the system is through imposing redundancy or near-redundancy in design. For instance, critical components in commercial airplanes are duplicated \cite{Dow09}, and military UAVs use a combination of different sensing systems for navigation \cite{HarGil16}. In the latter example, while these different sensing systems do not work in the same way and, in regular flight regime, serve to complement each other, each system is able to ensure that the UAV can achieve basic control objectives even if complementary systems are not functioning. 

Motivated by the above scenarios, our work seeks to investigate how to guarantee continued safe operation of an abstract control system in which some components are no longer under the controller's authority. We focus on systems with linear driftless discrete-time dynamics, and interpret the partial loss of control authority as limitations on the controller's choice of actions, based on adversarial inputs. The control objective that we investigate is {\em safety}: the system state is required to remain within a particular set throughout the system run. We are interested in (i) developing a safe control policy, if one exists, and (ii) determining a {\em resilient system design} --- i.e., a partition of the set of all control inputs that the system can apply --- which ensures that the system will be able to remain safe even if the adversary limits the available actions to a single element of the partition at any given time.

The work in this paper is closely related to previous research on control of safety-critical systems \cite{TomLyg00,Tab09} and safety games \cite{Beretal02, DoyRas11, NelTop16}. In particular, as we will show, given a system design, i.e., possible control inputs given an adversarial input, a safe control policy can be interpreted as a winning strategy for a turn-based safety game. This interpretation leads to a computationally efficient algorithm for designing a safe control policy. However, such an algorithm does not directly provide for a computationally feasible procedure of determining whether there {\em exists} a resilient system design, as each design corresponds to a different safety game, and searching through all possible games is computationally prohibitive. We address this challenge through a method based on a graph-theoretical interpretation of system design.

The outline of the remainder of this paper is as follows. In Section \ref{probsta} we provide a motivation for theoretical framework used in the paper, and formally describe the problems of safe control design and resilient system design under adversarial action. We then interpret such problems within the context of safety games in Section \ref{gam}, resulting in a simple solution to the problem of safe control design. We interpret the problem of resilient system design in a graph-theoretical setting in Section \ref{gra}, and --- using the probabilistic method, as described in \cite{AloSpe08} --- provide a sufficient condition and a necessary condition for its solvability in Section \ref{condit}. Based on the previous section, we provide a computationally efficient algorithm for resilient system design and construction of a safe control policy in Section \ref{ela}. Section \ref{exams} illustrates our techniques on two examples: an autonomous vehicle experiencing partial loss of control authority, and design of codes for communication over a channel with a bounded running digital sum.

\textbf{Notation.} The symbol $\NN$ denotes all strictly positive integers, $\NN_0$ denotes all nonnegative integers, and $\ZZ$ denotes all integers. For $m\in\NN$, $[m]$ denotes the set $\{1,\ldots,m\}$. For a set $\cX$, $|\cX|$ denotes its cardinality, and $2^{\cX}$ the set of all its subsets. For an event $B$ within a particular probability distribution, $\Pr(B)$ denotes the probability of $B$ occurring. For a graph $G=(V,E)$ and vertex $v\in V$, $\deg_G(v)$ denotes the (outgoing, if the graph is directed) degree of $v$, and $\mindeg(G)$ denotes the minimal (outgoing) degree of any vertex in $V$. If $G, H$ are graphs, $G\subseteq H$ signifies that $G$ is an induced subgraph of $H$. Vector $e_i$ denotes the standard basis vector consisting solely of zeros, except for a $1$ in the $i$-th position. Symbol $\|v\|_\infty$ denotes the max-norm of a vector $v\in\RR^n$, and $\|v\|_1$ denotes the $1$-norm of a vector $v$.


\section{Problem Statement}
\label{probsta}

Consider a system operating with discrete-time dynamics \begin{equation}
\label{dtd}
x(t+1)=x(t)+u(t)
\end{equation}
for all times $t\in\NN_0$, where $x(0)\in\ZZ^n$ and $u\in\cU\subseteq\ZZ^n$, with a finite $\cU$.
While model \eqref{dtd} is simple, our use of it is motivated by its wide presence in robotic exploration (see, e.g., \cite{Yam97, Megetal12, Oswetal16}, and the references therein) as well as its use in communication over a channel \cite{CohLit91}. As we will discuss in subsequent sections, \eqref{dtd} yields a straightforward graph-theoretical interpretation of system motion which may lead to generalizations for more complex models. 

To provide motivation for the problems that we will pose, let us assume that dynamics \eqref{dtd} represent an autonomous system controlled by actuators $A_1$, $A_2$, \ldots, $A_p$. The control effort $u(t)$ is then given as $u(a_1(t),\ldots,a_p(t))$, where $a_i(t)\in\cA_i$ is the setting of actuator $A_i$ at time $t$, and $\cU=\{u(a_1,\ldots,a_p)~|~a_i\in\cA_i, i=1,\ldots,p\}$.

We are interested in the scenario where the controller experiences loss of authority over some of the actuators, say $A_1, \ldots, A_r$. Thus, the choice of $a_1(t), \ldots, a_r(t)$ is not made by the controller, and any control actuation $u(t)$ needs to chosen in the set 
$U(a_1(t),\ldots,a_r(t))= \{u(a_1(t),\ldots,a_r(t),a_{r+1},\ldots, a_p)~|~a_i\in \cA_i, i\geq r+1\}$.
We assume that we do not possess any prior knowledge about the inputs $a_1(t), \ldots, a_r(t)$; these may be subjects to adversarial choices. 

The control objective that we consider is {\em safety}. That is, we want to ensure that $x(t)\in S$ for all $t\geq 0$, where $S\subseteq\ZZ^n$ is a predetermined set with $x(0)\in S$. We are interested in two questions:
\begin{enumerate}[(i)]
\item For given sets $U(a_1,\ldots, a_r)$, determine, if it exists, a control policy that guarantees system safety regardless of choices $a_1(t), \ldots, a_r(t)$.
\item Design sets $U(a_1, \ldots, a_r)$ so that the above control policy exists. 
\end{enumerate}

The latter question corresponds to designing the abilities and role of each actuator in such a way that the system is resilient to loss of authority over some of the actuators.

If the system can exhibit perfect redundancy, i.e., $U(a_1, \ldots, a_r)=\cU$ for every $a_1\in\cA_1$, \ldots, $a_r\in\cA_r$, questions (i) and (ii) are simple. However, redundancy is often undesirable due to cost, weight, or resource consumption \cite{Sghetal08}. Thus, we assume that it is impossible to execute exactly the same control with two different actuations. Under this assumption, $\{U(a_1,\ldots,a_r)~|~ a_1\in\cA_1, \ldots, a_r\in\cA_r\}$ is a partition of $\cU$. 
For the sake of simpler notation, we denote $\cA_1\times\cA_2\times\cdots\times\cA_r=[m]$ for some $m\in\NN$.

Questions (i) and (ii) are now formulated as follows.

{\em Restricted partition control problem (RPCP):}
Let $S\subseteq\ZZ^n$ and $x(0)\in S$. Let $\cU\subseteq\ZZ^n$ be finite, and $U:[m]\to 2^\cU$ such that $\{U(1),\ldots,U(m)\}$ is a partition of $\cU$. Does there exist a function $\hat{u}:\cup_{i=1}^\infty [m]^i\to\cU$ such that
\begin{enumerate}[(i)]
\item $\hat{u}(d_1,\ldots,d_k)\in U(d_k)$ for all $d_1,\ldots,d_k\in [m]$, and
\item for every $d:\NN_0\to[m]$, if $x(t)$ is the solution of \eqref{dtd} with $u(t)=\hat{u}(d(0),\ldots,d(t))$, then $x(t)\in S$ for all $t\in\NN_0$?
\end{enumerate}

{\em Free partition control problem (FPCP):}
Let $S\subseteq\ZZ^n$ and $x(0)\in S$. Let $\cU\subseteq\ZZ^n$ be finite. Does there exist a partition $\{U(1),\ldots,U(m)\}$ for which the RPCP admits a solution?

We note that in practice the available choices of partitions in the FPCP may be subject to constraints, e.g., physical limitations in design of actuators. We use the unconstrained version to provide an elegant illustration of a general approach to solving the above problems. Before moving towards solutions of the RPCP and the FPCP, let us introduce a running example.

\begin{example}[Damaged vehicle]
\label{exrun}
Consider an autonomous vehicle moving on $\ZZ^2$ according to dynamics \eqref{dtd}. At every instance in time, the vehicle can perform one of five actions: go one position to the north, south, east or west, or remain in the same position. In other words, $\cU=\{e_1,-e_1,e_2,-e_2,(0,0)\}$. The vehicle's initial position is given by $x(0)=(0,0)$, and the safe set $S$ is given by $S=\{x\in\ZZ^2~|~\|x\|_\infty\leq 1\}$. The setup is graphically illustrated in Fig.~\ref{runn}.

Let us first consider the RPCP with $m=2$ and $U(1)=\{e_1,e_2\}$, $U(2)=\{-e_1,-e_2,(0,0)\}$. In such a case, the RPCP does not admit a solution. For instance, if the adversary continually chooses $d=1$, the vehicle will have to keep moving north or east. Hence, after no more than $3$ steps, it will be forced to leave $S$. This situation is shown on the left side of Fig.~\ref{runn}.

\begin{figure}[ht]
\center
\begin{tikzpicture}[scale=0.95]
\clip (-1.65,-1.65) rectangle (2.65,2.65);
\tikzstyle{point}=[thick,fill=black, draw=black,circle,inner sep=0pt,minimum width=7pt,minimum height=7pt]
\tikzstyle{darkstyle}=[circle,draw,fill=gray!40,inner sep=0pt,minimum width=15pt,minimum height=15pt]
\tikzstyle{greenstyle}=[circle,draw,fill=green!20,inner sep=0pt,minimum width=15pt,minimum height=15pt]
\tikzset{->-/.style={decoration={
  markings,
  mark=at position .5 with {\arrow{>}}},postaction={decorate}}}
\tikzset{-<-/.style={decoration={
  markings,
  mark=at position .5 with {\arrow{Computer Modern Rightarrow[slant=.6]}}},postaction={decorate}}}

\foreach \x in {-3,...,3}
    \foreach \y in {-3,...,3}
       \node (\x\y) at (\x,\y) {};
       
\foreach \x in {-3,...,3}
    \foreach \y in {-3,...,3} {
    	\draw[red,->-] (\x\y) arc(-45:45:0.71);
    	\draw[blue,->-] (\x\y) arc(45:135:0.71);     
     	\draw[blue,->-] (\x\y) arc(135:225:0.71);
         \draw[red,->-] (\x\y) arc(225:315:0.71);
    }
\foreach \x in {-1,...,2}
    \foreach \y in {-1,...,2} {                 \draw[blue,-<-] (\x,\y+0.2) arc(-90:270:0.13);
    }

\foreach \x in {0,...,1}
    \foreach \y in {0,...,1} {   
\draw[ultra thick, red, ->-] (\x\y) arc(225:315:0.71);
\draw[ultra thick, red, ->-] (\x\y) arc(-45:45:0.71);
}

\draw[ultra thick, red, ->-] (0,2) arc(225:315:0.71);
\draw[ultra thick, red, ->-] (0,2) arc(-45:45:0.71);
\draw[ultra thick, red, ->-] (2,0) arc(225:315:0.71);
\draw[ultra thick, red, ->-] (2,0) arc(-45:45:0.71);
 
\foreach \x in {-2,...,2}
    \foreach \y in {-2,...,2}
       \node[darkstyle] (\x\y) at (\x,\y) {};

\foreach \x in {-1,...,1}
    \foreach \y in {-1,...,1}
       \node[greenstyle] (\x\y) at (\x,\y) {};
 
\node[point] at (0,0) {};
\end{tikzpicture}
\hspace*{-2.5pt}
\rulesep
\begin{tikzpicture}[scale=0.95]
\clip (-1.65,-1.65) rectangle (2.65,2.65);
\tikzstyle{point}=[thick,fill=black, draw=black,circle,inner sep=0pt,minimum width=7pt,minimum height=7pt]
\tikzstyle{darkstyle}=[circle,draw,fill=gray!40,inner sep=0pt,minimum width=15pt,minimum height=15pt]
\tikzstyle{greenstyle}=[circle,draw,fill=green!20,inner sep=0pt,minimum width=15pt,minimum height=15pt]
\tikzstyle{darkgreen}=[circle,draw,fill=green!70!black,inner sep=0pt,minimum width=15pt,minimum height=15pt]
\tikzset{->-/.style={decoration={
  markings,
  mark=at position .5 with {\arrow{>}}},postaction={decorate}}}
\tikzset{-<-/.style={decoration={
  markings,
  mark=at position .5 with {\arrow{Computer Modern Rightarrow[slant=.6]}}},postaction={decorate}}}

\foreach \x in {-3,...,3}
    \foreach \y in {-3,...,3}
       \node (\x\y) at (\x,\y) {};
 
\foreach \x in {-3,...,3}
    \foreach \y in {-3,...,3} {
    	\draw[blue,->-] (\x\y) arc(-45:45:0.71);
    	\draw[red,->-] (\x\y) arc(45:135:0.71);     
     	\draw[blue,->-] (\x\y) arc(135:225:0.71);
         \draw[red,->-] (\x\y) arc(225:315:0.71);
   }
\foreach \x in {-1,...,2}
    \foreach \y in {-1,...,2} {
         \draw[blue,-<-] (\x,\y+0.2) arc(-90:270:0.13);
    }
    
\draw[thick, blue,-<-] (0,0.2) arc(-90:270:0.13);
\draw[thick, blue,-<-] (1,0.2) arc(-90:270:0.13);
    
\draw[ultra thick, blue] (0,0.2) arc(-90:270:0.13);
\draw[ultra thick, blue] (1,0.2) arc(-90:270:0.13);
\draw[ultra thick, red,->-] (0,0) arc(225:315:0.71);
\draw[ultra thick, red,->-] (1,0) arc(45:135:0.71);

\foreach \x in {-2,...,2}
    \foreach \y in {-2,...,2}
       \node[darkstyle] (\x\y) at (\x,\y) {};
 
\foreach \x in {-1,...,1}
    \foreach \y in {-1,...,1}
       \node[greenstyle] (\x\y) at (\x,\y) {};
 
\node[darkgreen] at (0,0) {};
\node[point] at (0,0) {};
\node[darkgreen] at (1,0) {};
\end{tikzpicture}
\caption{The picture on the left illustrates a counterexample to solvability of the RPCP for $U(1)=\{e_1,e_2\}$, $U(2)=\{-e_1,-e_2,(0,0)\}$ in Example \ref{exrun}. The safe set $S$ is denoted in light green. The vehicle's initial position $x(0)=(0,0)$ is denoted by a black circle. Possible vehicle movements from each $x\in\ZZ^n$ are denoted by an arrow. Red arrows denote available movements when the adversary chooses $d=1$, and blue arrows denote available movements when $d=2$. Possible movements in the case when the adversary chooses $d(0)=d(1)=d(2)=1$ are drawn thickly. The picture on the right illustrates of solvability of the FPCP in Example \ref{exrun}. Same notation as in the left is used. The partition $\{U(1),U(2)\}$ is chosen in such a way that, regardless of the choice of $d(t)$, the vehicle can always remain in the dark green subset of the safe set.}
\label{runn}
\end{figure}
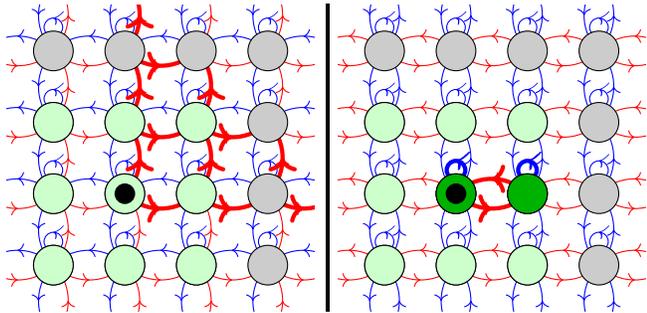

On the other hand, the FPCP admits a solution for $m=2$. Let $U(1)=\{e_1,-e_1\}$ and $U(2)=\{e_2,-e_2,(0,0)\}$. Then, when the adversary chooses $d=1$ for the first time, the vehicle can choose to move east, then west the next time, then east again, etc. If the adversary chooses $d=2$, the vehicle can remain in place. Hence, the vehicle will always remain within $S$. Such a strategy is depicted on the right side of Fig.~\ref{runn}.
\hfill $\square$
\end{example}

We now continue towards providing a solution for the RPCP and the FPCP.

\section{Game Formulation}
\label{gam}

The RPCP can be easily formulated as the question of existence of a winning strategy in the following two-player game.

\begin{game}
\label{gm1}
Let $S\subseteq\ZZ^n$ and $x(0)\in S$. Let $\cU\subseteq\ZZ^n$ be finite, and $\{U(1),\ldots,U(m)\}$ be a partition of $\cU$. Let $G=(V,E)$ be a graph with $V=\ZZ^n$ and $E=\{(x,y)~|~y-x\in\cU\}$, and $l:E\to[m]$ a labeling given by 
\begin{equation}
\label{labe}
l(x,y)=d \qquad\textrm{if }y-x\in U(d)\textrm{.}
\end{equation}

The game proceeds as follows. Before time $t=0$, a token is placed at $x(0)$. At every time step $t$, Player 1 first chooses an element $d\in [m]$. Then, Player 2 chooses an element $x(t+1)\in V$ such that $(x(t),x(t+1))\in E$ and $l(x(t),x(t+1))=d$, if such an element exists, and moves the token to $x(t+1)$. The game now proceeds to the next time step. Player 2 wins the game if it can always move the token, and the token remains within $S$ for all $t\in\NN_0$. Otherwise, Player 1 wins.
\end{game}

\begin{proposition}
\label{propo0}
The RPCP admits a solution if and only if there exists a winning strategy for Player 2 in Game \ref{gm1}.
\end{proposition}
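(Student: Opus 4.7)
The plan is to establish a direct correspondence between RPCP control policies and Player 2 strategies, with the labeling $l$ acting as the translator between control inputs $u \in \cU$ and edges in $G$. A preliminary observation that underpins both directions: since $V = \ZZ^n$ and each part $U(d)$ of the partition is nonempty, from any token position $x \in V$ and any $d \in [m]$ there exists at least one outgoing edge labeled $d$ (namely $(x, x+u)$ for any $u \in U(d)$). Consequently Player 2 is never blocked from moving, and the sole condition for victory is that the trajectory stay in $S$.

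For the forward implication, suppose $\hat{u}$ solves the RPCP. I would define Player 2's strategy by the rule: given history of adversary choices $d(0), \ldots, d(t)$ and current token position $x(t)$, move the token to $x(t+1) = x(t) + \hat{u}(d(0), \ldots, d(t))$. Condition (i) of the RPCP forces $\hat{u}(d(0),\ldots,d(t)) \in U(d(t))$, so by \eqref{labe} the chosen edge has label $d(t)$ and is therefore legal. Condition (ii) applied to the arbitrary adversary sequence $d$ then guarantees $x(t) \in S$ for all $t$, so this strategy wins against every Player 1 strategy.

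For the reverse implication, suppose Player 2 possesses a winning strategy $\sigma$. Given any finite sequence $d_1, \ldots, d_k \in [m]$, I would simulate Game 1 with $x(0)$ as the starting token and adversary moves $d(0) = d_1, \ldots, d(k-1) = d_k$, producing token positions $x(0), \ldots, x(k)$ via $\sigma$. I then set $\hat{u}(d_1, \ldots, d_k) = x(k) - x(k-1)$. Because $\sigma$'s move from $x(k-1)$ is a legal edge with label $d_k$, the displacement $x(k) - x(k-1)$ lies in $U(d_k)$, verifying RPCP condition (i). Given any infinite adversary sequence $d : \NN_0 \to [m]$, the trajectory generated by plugging $\hat{u}$ into \eqref{dtd} coincides by construction with the trajectory produced by $\sigma$ against $d$ in Game 1, so it stays in $S$, giving condition (ii).

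The only conceptual subtlety to watch is that a priori a Player 2 strategy may depend on the full history (positions together with adversary moves), while $\hat{u}$ sees only the sequence of adversary moves; however, in Game 1 the token positions are themselves determined deterministically by the adversary moves together with Player 2's previous responses, so the two information streams carry identical content and no information is lost in either translation. I expect this bookkeeping to be the only delicate point, and everything else to follow from unfolding definitions.
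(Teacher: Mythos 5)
Your proposal is correct and follows essentially the same route as the paper's proof: both rest on the identification $u(t)=x(t+1)-x(t)$ together with the observation that the edge-label constraint $l(x(t),x(t+1))=d$ is exactly the partition constraint $u(t)\in U(d(t))$. You simply unfold the two directions more explicitly (including the never-blocked observation and the history-equivalence remark), which the paper compresses into a single correspondence argument.
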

\begin{proof}
By taking $u(t)=x(t+1)-x(t)$, it is clear that the movement of the token in Game \ref{gm1} corresponds to \eqref{dtd}. The requirement that $(x(t),x(t+1))\in E$ and $l(x(t),x(t+1))=d$ corresponds to the requirement that $u(t)\in U(d(t))$. Thus, Player 2 has a winning strategy in Game \ref{gm1} if and only if there exists $u(t)\in U(d(t))$, possibly dependent on all previous inputs $d(0),\ldots,d(t)$, such that $x(t)\in S$. The latter statement is exactly the statement of the RPCP.
\end{proof}

Game \ref{gm1} is a turn-based safety/reachability game with complete information as described \cite{DoyRas11}. Thus, for finite $S$, the RPCP can be solved in linear time with respect to the size of $S$ \cite{DoyRas11}. In the remainder of this paper, we focus on the FPCP. In a game-theoretical setting, the FPCP can be posed as follows.

\begin{game}
\label{gm2}
Let $S$, $x(0)$, $\cU$, and $G=(V,E)$ be as in Game~\ref{gm1}. Let $m\in\NN$. At time $t=-1$, Player 2 chooses $U(d)\subseteq\cU$ for all $d\in [m]$ in such a way that $\{U(d)~|~d\in [m]\}$ is a partition of $\cU$. Then, each edge $(x,y)\in E$ is labeled as in \eqref{labe}. After this step, the game proceeds the same as Game \ref{gm1}.
\end{game}

Analogously to Proposition \ref{propo0}, it can be easily shown that the FPCP admits a solution if and only if Player 2 has a winning strategy in Game \ref{gm2}.

The problem of the existence of a winning strategy in Game \ref{gm2} can nominally be solved by reducing it to the problem of existence of a winning strategy in Game \ref{gm1}. Namely, every choice of a partition $\{U(d)~|~d\in [m]\}$ at time $t=-1$ generates a different instance of Game \ref{gm1}, so Player 2 has a winning strategy in Game \ref{gm2} if and only if there exists a partition $\{U(d)~|~d\in [m]\}$ for which Player 2 has a winning strategy in Game \ref{gm1}. However, an algorithm that determines a winning strategy for Game \ref{gm2} by considering all partitions $\{U(d)~|~d\in [m]\}$ is infeasible for large $\cU$, as the number of those partitions is not less than $m^{|\cU|-m}$ \cite{RenDob69}.

In the following section, we propose a graph-theoretical approach to the problem of determining the existence of winning strategies for Player 2 in the above games, resulting in easily computable conditions for the existence of a partition and a controller in the FPCP.

\section{Graph Labeling Problem}
\label{gra}

The previous section interprets system motion as a game on a labeled graph. By building upon this approach, we can convert the problem of finding a partition of the set of control inputs that admits a safe control policy --- the FPCP --- to an equivalent problem of labeling of graph edges.

\begin{theorem}
\label{propo1}
Let $S$, $x(0)$, $\cU$, and $G$ be as in Game~\ref{gm1}. The FPCP admits a solution if and only if there exist an induced subgraph $G_{\hat{S}}=(\hat{S},E_{\hat{S}})\subseteq G$ with $\hat{S}\subseteq S$ and a labeling $l~:~E_{\hat{S}}\to [m]$ such that the following properties hold:
\begin{enumerate}[leftmargin=23pt]
\item[(C1)] $x(0)\in \hat{S}$,
\item[(C2)] for all $x\in \hat{S}$, $$l\left(\{(x,x')\in E_{\hat{S}}~|~x'\in\hat{S}\}\right)=[m]\textrm{,}$$ and 
\item[(C3)] if $(x,y),(x',y')\in E_{\hat{S}}$ satisfy $y-x=y'-x'$, then $l(x,y)=l(x',y')$.
\end{enumerate}
\end{theorem}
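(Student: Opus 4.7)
The plan is to prove each direction by explicit construction, translating between the partition-based data of the FPCP and the subgraph-with-labeling data of the theorem.

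For the forward direction ($\Rightarrow$), suppose the FPCP admits a solution consisting of a partition $\{U(1),\ldots,U(m)\}$ of $\cU$ and a safe policy $\hat{u}$. I would define the labeling on all of $E$ by $l(x,y) = d$ iff $y - x \in U(d)$, which is well-defined since the $U(d)$ partition $\cU$, and makes (C3) immediate because $l$ depends only on the difference $y - x$. I would then take $\hat{S}$ to be the set of states reachable from $x(0)$ under $\hat{u}$ against every adversarial sequence, namely $\hat{S} = \{x(k) : k \in \NN_0,\, (d_1,\ldots,d_k) \in [m]^k\}$ where $x$ is the solution of \eqref{dtd} with $u(t) = \hat{u}(d_1,\ldots,d_{t+1})$. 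Safety of $\hat{u}$ yields $\hat{S} \subseteq S$, and (C1) holds by taking $k = 0$. For (C2), any $x \in \hat{S}$ is reached via some history $(d_1,\ldots,d_k)$; for each additional $d \in [m]$, the successor $x + \hat{u}(d_1,\ldots,d_k,d)$ lies in $\hat{S}$ and the edge to it carries label $d$, so every label in $[m]$ is realized by an in-$\hat{S}$ successor of $x$.

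For the backward direction ($\Leftarrow$), given $\hat{S}, E_{\hat{S}}, l$ satisfying (C1)--(C3), I would set $U(d) = \{y - x : (x,y) \in E_{\hat{S}},\, l(x,y) = d\}$ and then distribute any element of $\cU$ not yet covered arbitrarily among the $U(d)$ (say, throwing all leftovers into $U(1)$). By (C3) the sets defined by the first clause are pairwise disjoint, so the result is a partition of $\cU$. By (C2) there exists a choice function $\sigma : \hat{S} \times [m] \to \hat{S}$ with $(x,\sigma(x,d)) \in E_{\hat{S}}$ and $l(x,\sigma(x,d)) = d$. I would build $\hat{u}$ by iterated simulation: given history $d_1,\ldots,d_k$, recursively compute $x(t+1) = \sigma(x(t), d_{t+1})$ starting from the prescribed $x(0)$, and set $\hat{u}(d_1,\ldots,d_k) := x(k) - x(k-1)$. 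Induction on $k$ yields $x(k) \in \hat{S} \subseteq S$, and $\hat{u}(d_1,\ldots,d_k) \in U(d_k)$ by construction.

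The main obstacle I anticipate is the bookkeeping in the backward direction: the labeling $l$ is defined only on edges inside $\hat{S}$, so the sets $\{y - x : (x,y) \in E_{\hat{S}},\, l(x,y) = d\}$ need not exhaust $\cU$. Absorbing the leftover difference vectors into one of the $U(d)$ is benign precisely because the policy I build lives entirely inside $E_{\hat{S}}$ and therefore never invokes those vectors, but this observation must be made explicit to confirm both the partition property of $\{U(1),\ldots,U(m)\}$ and the membership $\hat{u}(d_1,\ldots,d_k) \in U(d_k)$.
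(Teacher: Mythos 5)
Your proposal is correct and follows essentially the same route as the paper's proof: take $\hat{S}$ to be the set of states reachable under the safe policy in the forward direction, and in the backward direction recover the partition from the labels (absorbing the uncovered elements of $\cU$ into one class, which is harmless because the constructed policy only ever uses edges of $E_{\hat{S}}$) and build the policy from a choice function supplied by (C2). The only cosmetic differences are that the paper dumps the leftover inputs into $U(m)$ rather than $U(1)$ and constructs a memoryless policy directly, remarking afterwards that this suffices, whereas you unfold it into a history-dependent policy by simulation.
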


\begin{proof}
As previously noted, the FPCP admits a solution if and only if there exists a winning strategy for Player 2 in  Game \ref{gm2}. Assume first that such a winning strategy exists, with the corresponding partition $\{U(d)~|~d\in [m]\}$ and a labeling $l:E\to [m]$ that satisfies \eqref{labe}. Let us now define $G_{\hat{S}}=(\hat{S},E_{\hat{S}})$ as the induced subgraph of $G$ with its vertex set $\hat{S}$ consisting of {\em all} the values that the system state $x(t)$ can possibly assume under the chosen winning strategy, for {\em all} potential input sequences $d:\NN_0\to [m]$. We claim that $G_{\hat{S}}$, with the labeling $l$ restricted to $E_{\hat{S}}$, satisfies (C1)--(C3).

First, since $\hat{S}$ is constructed from the winning strategy of Player 2, $x(0)\in\hat{S}\subseteq S$. Thus, (C1) holds. Property (C2) holds because, by definition of $\hat{S}$, for each $x\in \hat{S}$ there exists a $t\geq 0$ and a sequence $d(0),\ldots,d(t-1)$ such that $x(t)=x$, and for each $d'\in [m]$, setting $d(t)=d'$ requires that $l(x(t),x(t+1))=d'$. Property (C3) holds by \eqref{labe}.

In the other direction, assume that there exist an induced subgraph $G_{\hat{S}}$, $\hat{S}\subseteq S$, and a labeling function $l:E_{\hat{S}}\to[m]$ that satisfies (C1)--(C3). We will prove that the FPCP admits a solution.

Define 
\begin{equation}
\label{defur1}
\tilde{U}(d)=\left\{y-x~|~(x,y)\in E_{\hat{S}},\, l(x,y)=d\right\}
\end{equation}
for all $d\in \{1,\ldots,m\}$, and
\begin{equation}
\label{defur2}
\begin{split}
U(d)& =\tilde{U}(d) \textrm{ for all } d\leq m-1\textrm{,}\\
U(m)& =\tilde{U}(m)\bigcup\left(\cU\backslash\bigcup_{d=1}^{m-1}\hat{U}(d)\right)\textrm{.}
\end{split}
\end{equation}
Clearly, $\{U(1),\ldots,U(m)\}$ is a partition of $\cU$. We define $\tilde{l}:E\to [m]$ by \eqref{labe}, with $U(d)$ defined as in \eqref{defur1}--\eqref{defur2}. For any $(x,y)\in E_{\hat{S}}$, $\tilde{l}(x,y)=d$ if and only if $y-x\in U(d)$ by \eqref{labe}, which by \eqref{defur1}--\eqref{defur2} implies $l(x,y)=d$. Thus, $\tilde{l}$ and $l$ are the same on $E_{\hat{S}}$, so with a standard abuse of notation, we will refer to $\tilde{l}$ as $l$ in the remainder of the proof. 

Let us now define $\hat{u}:\hat{S}\times [m]\to\cU$ as any function with a following property: 
\begin{equation}
\label{defu}
\hat{u}(x,d)\in \{y-x~|~y\in \hat{S},\, (x,y)\in E_{\hat{S}},\, l(x,y)=d\}\textrm{.}
\end{equation}
We note that the existence of a function $\hat{u}$ that satisfies \eqref{defu} follows from (C2), although uniqueness is not guaranteed. 

We claim that any system run given by $x(t+1)=x(t)+\hat{u}(x(t),d(t))$ results in the system state remaining within $\hat{S}\subseteq S$, and that $\hat{u}(x(t),d(t))\in U(d(t))$ for all $t\in\NN_0$. For the claim that $x(t)\in \hat{S}$ for all $t$, we proceed by induction. By (C1), $x(0)\in \hat{S}$. Assume now that $x(t)\in \hat{S}$. Then, $x(t+1)=x(t)+\hat{u}(x(t),d(t))\in \hat{S}$ by \eqref{defu}. 

For the claim that $\hat{u}(x(t),d(t))\in U(d(t))$ for all $t$, we note that $l(x(t),x(t)+\hat{u}(x(t),d(t)))=d(t)$ by \eqref{defu}, so $\hat{u}(x(t),d(t))\in U(d(t))$ by \eqref{defur1}--\eqref{defur2}.

Thus, $\hat{u}$ is a solution to the RPCP for the partition $\{U(1),\ldots,U(m)\}$. Hence, the FPCP admits a solution.
\end{proof}
\begin{remark}
In the latter direction in the proof of Theorem \ref{propo1}, technically we constructed a memoryless policy $\hat{u}:\hat{S}\times [m]\to\cU$ instead of a memory-conscious policy $\hat{u}:\cup_{i=1}^\infty [m]^i\to\cU$ as required in the RPCP. Thus, Theorem \ref{propo1} also shows that Game \ref{gm1} and Game \ref{gm2} admit a winning strategy for Player 2 if and only if they admit a {\em memoryless} winning strategy, which was also discussed in \cite{DoyRas11}.
\end{remark}

With Theorem \ref{propo1} in mind, the FPCP can be transformed into the following problem.

{\em Invariant subgraph labeling problem (ISLP):} 
Let $S$, $x(0)$, $\cU$, $m$, and $G$ be as in Game \ref{gm1}. Let $m\in\NN$. Determine whether there exist an induced subgraph $G_{\hat{S}}=(\hat{S},E_{\hat{S}})\subseteq G$ with $\hat{S}\subseteq S$ and a labeling $l:E_{\hat{S}}\to [m]$ which satisfy (C1)--(C3).

Let us briefly note that if one was to omit requiring (C3) from the ISLP, such a problem reduces to finding an induced subgraph $G_{\hat{S}}\subseteq G$ with $x(0)\in\hat{S}\subseteq S$ and $\mindeg(G_{\hat{S}})\geq m$. This problem is a variant of the {\em minimum subgraph of minimum degree} problem; see, e.g., \cite{Amietal12} and the references therein. 
We now proceed to determine sufficient and necessary conditions for the ISLP to admit a solution.

\section{Conditions for a Good Labeling}
\label{condit}

As discussed above, property (C2) in Theorem \ref{propo1} trivially imposes a simple necessary condition for the ISLP to admit a solution.

\begin{proposition}
\label{lem1}
If there exist an induced subgraph $G_{\hat{S}}$ and a labeling $l$ satisfying the conditions of ISLP, then $$\mindeg(G_{\hat{S}})\geq m\textrm{.}$$
\end{proposition}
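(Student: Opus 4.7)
The plan is to derive the bound directly from condition (C2), with no use of (C1) or (C3). Fix an arbitrary vertex $x \in \hat{S}$ and consider the set of its outgoing edges inside $G_{\hat{S}}$, namely $N_x = \{(x,x') \in E_{\hat{S}} : x' \in \hat{S}\}$. By (C2), the image $l(N_x)$ equals $[m]$, a set of cardinality $m$. Because $l$ is a function, each edge in $N_x$ contributes at most one label, so $|N_x| \geq |l(N_x)| = m$. But $|N_x|$ is exactly the outgoing degree of $x$ in $G_{\hat{S}}$, hence $\deg_{G_{\hat{S}}}(x) \geq m$. Taking the minimum over $x \in \hat{S}$ yields $\mindeg(G_{\hat{S}}) \geq m$.

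There is no real obstacle in the proof: the claim is essentially a pointwise restatement of (C2) at the level of degrees. The only thing to be careful about is the convention on ``outgoing'' degree in the subgraph, which is consistent with the notation $\deg_G$ introduced for directed graphs earlier, and with the interpretation of $G_{\hat{S}}$ as the induced subgraph of the directed graph $G$ with edge set $\{(x,y) : y-x \in \cU\}$. The proposition's usefulness lies not in the argument but in providing a purely combinatorial necessary condition on $G_{\hat{S}}$ that is blind to the labeling $l$, and which can therefore be checked before any labeling is constructed.
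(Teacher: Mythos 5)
Your proof is correct and matches the paper's reasoning exactly: the paper treats this proposition as an immediate consequence of (C2), observing that a vertex whose outgoing edge labels must cover all of $[m]$ needs at least $m$ outgoing edges in $G_{\hat{S}}$. Your careful note about the outgoing-degree convention is consistent with the paper's definition of $\deg_G$ for directed graphs, so there is nothing to add.
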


The condition given in Proposition \ref{lem1} is not sufficient for existence of a labeling satisfying the conditions of the ISLP. The following example gives an induced subgraph $G_{\hat{S}}\subseteq G$ with $\mindeg(G_{\hat{S}})\geq m$ such that no labeling $l:E_{\hat{S}}\to [m]$ satisfies (C2)--(C3).

\begin{example}
\label{exn}
Consider $n=2$, $m=3$, $x(0)=0$, $S=\{x\in\ZZ^2~|~\|x\|_1\leq 1\}$, and $U=\{u\in\ZZ^2~|~\|u\|_\infty=1\}$. Let $\hat{S}=S$. Clearly, $x(0)\in\hat{S}$, and, as illustrated in Fig.~\ref{fig2}, $\mindeg(G_{\hat{S}})\geq m$. Nonetheless, $S$ does not admit a labeling satisfying both (C2) and (C3). Assume otherwise. Let $l:~E_{\hat{S}}\to [m]$ be such a labeling. By (C3), $l$ is translation-invariant. Thus, we denote by $\hat{l}(1)$ the label of all edges that point north (i.e., $(x,y)\in E_{\hat{S}}$ such that $y-x=(0,1)$), $\hat{l}(2)$ the label of NE edges ($(x,y)\in E_{\hat{S}}$ such that $y-x=(1,1)$), $\hat{l}(3)$ for E edges, etc. By applying (C2) to 
\begin{enumerate}
\item[(i)] vertices $(0,-1)$, $(-1,0)$, $(0,1)$, and $(1,0)$, respectively, we can conclude that, for each $k\in\{0,1,2,3\}$, $\hat{l}(2k)$, $\hat{l}(2k+1)$, and $\hat{l}(2k+2)$ need to be all different (for ease of notation, we identify $\hat{l}(0)$ with $\hat{l}(8)$),
\item[(ii)] vertex $(0,0)$, we note that $\hat{l}(1)$, $\hat{l}(3)$, $\hat{l}(5)$, and $\hat{l}(7)$ need to have three different values.
\end{enumerate}

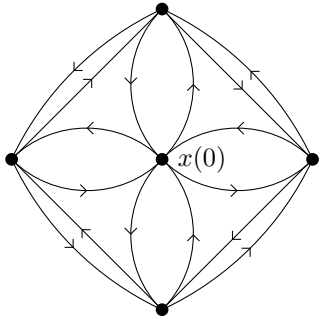
\begin{figure}[ht]
\centering
\begin{tikzpicture}[scale=2]
\tikzstyle{point}=[thick,fill=black, draw=black,circle,inner sep=0pt,minimum width=4pt,minimum height=4pt]
\tikzset{->-/.style={decoration={
  markings,
  mark=at position .5 with {\arrow{Straight Barb[]}}},postaction={decorate}}}
\tikzset{-<-/.style={decoration={
  markings,
  mark=at position .5 with {\arrow{Straight Barb[reversed]}}},postaction={decorate}}}
\node[point, label=0:{$x(0)$}] at (0,0) {};
\node at (0,1.15) {};
\node[point] at (0,1) {};
\node[point] at (-1,0) {};
\node[point] at (1,0) {};
\node[point] at (0,-1) {};
\draw[->-] (0,0) arc (45:135:0.71);
\draw[->-] (-1,0) arc (225:315:0.71);
\draw[->-] (1,0) arc (45:135:0.71);
\draw[->-] (0,0) arc (225:315:0.71);
\draw[->-] (0,1) arc (135:225:0.71);
\draw[->-] (0,0) arc (-45:45:0.71);
\draw[->-] (0,0) arc (135:225:0.71);
\draw[->-] (0,-1) arc (-45:45:0.71);
\draw[-<-] (0,1) -- (-1,0);
\draw[->-] (0,1) arc (45+70:225-70:2.07);
\draw[-<-] (-1,0) -- (0,-1);
\draw[->-] (-1,0) arc (135+70:315-70:2.07);
\draw[-<-] (0,-1) -- (1,0);
\draw[->-] (0,-1) arc (-135+70:45-70:2.07);
\draw[-<-] (1,0) -- (0,1);
\draw[->-] (1,0) arc (-45+70:135-70:2.07);

\end{tikzpicture}
\caption{An illustration of Example \ref{exn}. The vertices of $\hat{S}=S$ and corresponding directed edges of $E_{\hat{S}}$ are denoted in black.}
\label{fig2}
\end{figure}

Now, from (ii), assume without loss of generality that $\hat{l}(1)=1$, $\hat{l}(3)=2$, and $\hat{l}(5)=3$. Then, by (i) for $k=0$ and $k=1$, $\{\hat{l}(8),\hat{l}(2)\}=\{2,3\}$ and $\{\hat{l}(2),\hat{l}(4)\}=\{1,3\}$. Hence, $\hat{l}(2)=3$, $\hat{l}(8)=2$, and $\hat{l}(4)=1$. Since $\{\hat{l}(4),\hat{l}(6)\}=\{1,2\}$ by (i) for $k=2$, we have $\hat{l}(6)=2$. Thus, $\hat{l}(8)=\hat{l}(6)$, which is in contradiction with (i) for $k=3$.
\hfill $\square$
\end{example}

Even though Proposition \ref{lem1} only gives a necessary condition for the ISLP to admit a solution, there does exist a related sufficient condition. Namely, if there exists an induced subgraph $G_{\hat{S}}$ with large enough $\mindeg(G_{\hat{S}})$, then there exists a labeling of $E_{\hat{S}}$ which solves the FPCP. We prove such a result using the probabilistic method (see, e.g., \cite{Erd59, Erd61, AloSpe08} for more details).

\begin{theorem}
\label{thmbi}
Let $S$, $x(0)$, $\cU$, and $G=(V,E)$ be as in Game~\ref{gm1}. If there exists a finite induced subgraph $G_{\hat{S}}=(\hat{S},E_{\hat{S}})\subseteq G$ with $x(0)\in\hat{S}\subseteq S$ and \begin{equation}
\label{bou}
\mindeg(G_{\hat{S}})\geq m\ln\left(m|\hat{S}|\right)\textrm{,}
\end{equation}
then there exists a labeling $l:E_{\hat{S}}\to [m]$ such that $G_{\hat{S}}$ and $l$ satisfy properties (C1)--(C3).
\end{theorem}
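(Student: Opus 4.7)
The plan is to use the probabilistic method, as suggested in the paragraph preceding the theorem. Property (C3) forces any admissible labeling to depend only on the direction $y - x \in \cU$, so I would first pick a random labeling $\lambda : \cU \to [m]$ by assigning to each $u \in \cU$ a label drawn independently and uniformly from $[m]$, and then define $l(x,y) := \lambda(y-x)$ for every $(x,y) \in E_{\hat{S}}$. By construction this gives (C3) for free, and (C1) is immediate from the hypothesis $x(0) \in \hat{S}$. Only (C2) requires an argument.

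For a fixed vertex $x \in \hat{S}$, write $D_x = \{y - x \mid y \in \hat{S},\, (x,y) \in E_{\hat{S}}\} \subseteq \cU$ for its set of in-$\hat{S}$ outgoing directions. The elements of $D_x$ are distinct, and $|D_x| = \deg_{G_{\hat{S}}}(x) \geq \mindeg(G_{\hat{S}}) \geq m \ln(m|\hat{S}|)$ by \eqref{bou}. Condition (C2) at $x$ is precisely the event $\lambda(D_x) = [m]$. For any fixed label $j \in [m]$, since the labels $\{\lambda(u)\}_{u \in D_x}$ are i.i.d.\ uniform on $[m]$,
\[
\Pr[j \notin \lambda(D_x)] = \left(1 - \tfrac{1}{m}\right)^{|D_x|} < e^{-|D_x|/m} \leq e^{-\ln(m|\hat{S}|)} = \frac{1}{m|\hat{S}|},
\]
where the strict inequality uses $1 - 1/m < e^{-1/m}$, itself a consequence of $1+t < e^t$ for $t \neq 0$.

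A union bound over the $m$ choices of $j$ and the $|\hat{S}|$ choices of $x$ then yields
\[
\Pr[\,l \text{ violates (C2)}\,] \;<\; m \cdot |\hat{S}| \cdot \frac{1}{m|\hat{S}|} \;=\; 1,
\]
so with positive probability the random $l$ satisfies all of (C1)--(C3), and hence some deterministic labeling with the required properties exists. The one delicate point is that the hypothesis \eqref{bou} makes the union bound marginal: the non-strict estimate $(1-1/m)^{|D_x|} \leq e^{-|D_x|/m}$ alone would only give total failure probability $\leq 1$, which does not suffice; it is the strict inequality $1 - 1/m < e^{-1/m}$ that upgrades this to a genuine existence statement. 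I do not anticipate any other obstacle.
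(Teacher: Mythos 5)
Your proposal is correct and follows essentially the same argument as the paper's proof: a uniform i.i.d.\ labeling of the directions in $\cU$, the observation that (C1) and (C3) hold by construction, the bound $\Pr[j\notin\lambda(D_x)]=(1-1/m)^{\deg_{G_{\hat S}}(x)}<e^{-\mindeg(G_{\hat S})/m}\leq 1/(m|\hat S|)$, and a union bound over all $x\in\hat S$ and $j\in[m]$. Your remark about the necessity of the strict inequality $1-1/m<e^{-1/m}$ is exactly the point the paper handles via $(1-1/m)^m<e^{-1}$, so nothing is missing.
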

\begin{proof}
Let us label each element $u\in\cU$ by  $\hat{l}(u)\in [m]$, where each label is chosen independently and uniformly. We define $l:E_{\hat{S}}\to [m]$ by $l(x,y)=\hat{l}(y-x)$.
By definition of $l$, (C3) is satisfied. Property (C1) is also satisfied by the theorem assumptions. 

Let $B$ be the event that the label $l$ does not satisfy (C2), i.e., that there exists a vertex $x\in\hat{S}$ such that 
\begin{equation}
\label{aux1}
l\left(\{(x,x')\in E_{\hat{S}}~|~x'\in\hat{S}\}\right)\neq [m]\textrm{.}
\end{equation} Define $B_x$ as the event that $l$ satisfies \eqref{aux1} for the particular $x\in\hat{S}$. In particular, define $B_x^i$ as the event that $i\notin l(\{(x,x')\in E_{\hat{S}}~|~x'\in\hat{S}\})$. 

If we can show that $\Pr(B)<1$, this will mean that there exists at least one labeling $l$ such that $B$ does {\em not} occur, i.e., that (C1)--(C3) are all satisfied.

By the definitions of $B_x$ and $B_x^i$ and the union bound \cite{Ven12}, we obtain $$\Pr(B)\geq\sum_{x\in\hat{S}}\Pr(B_x)\geq \sum_{\substack{x\in\hat{S} \\
i \in [m]}}\Pr(B_x^i)\textrm{.}$$ Hence, if we show that 
\begin{equation}
\label{aux2}
\Pr(B_x^i)<1/(m|\hat{S}|)
\end{equation}
holds for all $x\in\hat{S}$ and $i\in [m]$, we are done.

Consider the event $B_x^i$ for fixed $x\in\hat{S}$ and $i\in [m]$. For each edge $(x,x')\in E_{\hat{S}}$, $x'-x\in\cU$ is different. Thus, labels $l(x,x')$ have been chosen uniformly and independently. Hence, \begin{equation*}
\begin{split}\Pr(B_x^i)& =\Pr\left(l(x,x')\neq i \textrm{ for all } (x,x')\in E_{\hat{S}}\right) \\ & =\prod_{(x,x')\in E_{\hat{S}}}\Pr\left(l(x,x')\neq i\right)=\prod_{(x,x')\in E_{\hat{S}}} (1-1/m)\textrm{.}
\end{split}
\end{equation*}
Thus, $\Pr(B_x^i)=(1-1/m)^{\deg_{G_{\hat{S}}}(x)}\leq (1-1/m)^{\mindeg(G_{\hat{S}})}$. By simply noting that $(1-1/m)^m<e^{-1}$ (see, e.g., \cite{Mol12}), we obtain $\Pr(B_x^i)\leq (1-1/m)^{\mindeg(G_{\hat{S}})}<e^{-\mindeg(G_{\hat{S}})/m}$. We now obtain \eqref{aux2} from \eqref{bou}.
\end{proof}

Theorem \ref{thmbi} gives a condition for solving the ISLP, i.e., the FPCP, based on finding a suitable subset $\hat{S}$ of the safe set. One way of producing such a subset is by finding a sufficiently dense subgraph of $S$, with a suitable definition of density. In the interest of brevity, we omit further details. We provide two illustrative examples of determining $\hat{S}$ in Section \ref{exams}.

Returning to the running example, construction on the right side of Fig.~\ref{runn}, where $\mindeg(G_{\hat{S}})=m<m\ln (m|\hat{S}|)$, shows that the condition expressed in Theorem \ref{thmbi} is not necessary for the solvability of the FPCP. We will return to this example in Section \ref{exams}, where we provide some intuition for the ``reason'' that it yields a solution to the FPCP, even though it does not satisfy the sufficient condition expressed in Theorem~\ref{thmbi}.

\section{Efficient Labeling Algorithm}
\label{ela}

The proof of Theorem \ref{thmbi} does not provide a mechanism for constructing a good labeling. Instead, it merely states that a uniformly chosen labeling will solve the FPCP with probability $1-\Pr(B)\geq 1-m|\hat{S}|(1-1/m)^{\mindeg(G_{\hat{S}})}$. Thus, an algorithm that randomly chooses labelings until it reaches one that solves the FPCP is going to have expected computational complexity no greater than $$O\left(\frac{|E_{\hat{S}}|}{1-m|\hat{S}|(1-1/m)^{\mindeg(G_{\hat{S}})}}\right)\textrm{,}$$ assuming that a random draw is performed in $O(1)$ time, and including the time to verify whether a labeling satisfies (C2). Thus, if $m|\hat{S}|(1-1/m)^{\mindeg(G_{\hat{S}})}\approx 1$, a randomized algorithm might take a substantial amount of time to finish.

We now present an alternative deterministic algorithm that produces a correct labeling in $O(|E_{\hat{S}}|+|\cU|m|\hat{S}|)$ operations.

\begin{algorithm}
\label{alg}
Let $\cU=\{u_1,\ldots,u_{|\cU|}\}$. Define a labeling $\hat{l}$ on $\cU$ inductively as follows. Let \begin{equation}
\begin{split}
\label{defli}
l_i\in\argmin_{l'\in [m]}\sum_{\substack{x\in\hat{S} \\
j \in [m]}}& \Pr(B_x^j~|~\hat{l}(u_1)=l_1,\ldots, \\
&\ldots,\hat{l}(u_{i-1})=l_{i-1},\hat{l}(u_i)=l')
\end{split}
\end{equation}
and define $\hat{l}(u_i)=l_i$ for $i=1,2,\ldots,|\cU|$, where labeling $l~:~E_{\hat{S}}~\to~[m]$ is given by $l(x,y)=\hat{l}(y-x)$ for all $(x,y)\in E_{\hat{S}}$.
\end{algorithm}

\begin{theorem}
Assume that \eqref{bou} holds for a finite induced subgraph $G_{\hat{S}}$ with $x(0)\in\hat{S}\subseteq S$. Let $\hat{l}$ and $l$ be defined as in Algorithm \ref{alg}. Then, $G_{\hat{S}}$ and $l$ satisfy properties (C1)--(C3).
\end{theorem}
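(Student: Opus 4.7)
The plan is to derandomize the probabilistic argument behind Theorem~\ref{thmbi} via the method of conditional expectations. Conditions (C1) and (C3) come essentially for free: (C1) holds by the assumption $x(0)\in\hat{S}$, while (C3) holds by construction, since Algorithm~\ref{alg} first defines the increment labeling $\hat{l}$ on $\cU$ and only then sets $l(x,y)=\hat{l}(y-x)$, so equal increments automatically force equal labels. The nontrivial content is therefore (C2), i.e., ruling out the bad event $B=\bigcup_{x,j}B_x^j$ from the proof of Theorem~\ref{thmbi}.

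I would introduce the potential
\[
\Phi_i \;=\; \sum_{\substack{x\in\hat{S}\\ j\in[m]}} \Pr\!\left(B_x^j \,\Big|\, \hat{l}(u_1)=l_1,\ldots,\hat{l}(u_i)=l_i\right),
\]
where for $k>i$ the values $\hat{l}(u_k)$ are still drawn uniformly and independently from $[m]$. The computation in the proof of Theorem~\ref{thmbi}, combined with the assumption \eqref{bou}, gives $\Phi_0<1$. The key step is then to show by induction on $i$ that $\Phi_i\le\Phi_{i-1}$. By the law of total probability applied to the uniform random variable $\hat{l}(u_i)$, which is independent of $\hat{l}(u_1),\ldots,\hat{l}(u_{i-1})$ and of all $\hat{l}(u_k)$ with $k>i$,
\[
\Phi_{i-1} \;=\; \frac{1}{m}\sum_{l'\in[m]} \Phi_i^{(l')},
\]
where $\Phi_i^{(l')}$ denotes the value of $\Phi_i$ if one were to set $l_i=l'$. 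Hence some choice $l'$ achieves $\Phi_i^{(l')}\le\Phi_{i-1}$, and the rule \eqref{defli} in Algorithm~\ref{alg} picks the minimizer, so $\Phi_i\le\Phi_{i-1}$ holds a fortiori.

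Iterating the inequality yields $\Phi_{|\cU|}\le\Phi_0<1$. Once all labels have been fixed, however, each conditional probability $\Pr(B_x^j\mid\cdot)$ is either $0$ or $1$, so $\Phi_{|\cU|}$ is a nonnegative integer strictly less than $1$. Therefore $\Phi_{|\cU|}=0$, meaning that none of the events $B_x^j$ occurs under the deterministic labeling produced by Algorithm~\ref{alg}, which is exactly (C2).

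The main obstacle I anticipate is the bookkeeping in the averaging identity: one must be explicit that conditioning on the already-fixed labels $\hat{l}(u_1),\ldots,\hat{l}(u_{i-1})$ preserves the uniform and independent distribution of $\hat{l}(u_i)$ in the underlying random labeling, so that the law of total probability applies as stated. Once this is laid out, the rest of the argument is a direct inductive use of the bound already established in the proof of Theorem~\ref{thmbi}.
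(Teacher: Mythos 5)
Your proposal is correct and follows essentially the same route as the paper: both use the method of conditional expectations with the potential $\sum_{x,j}\Pr(B_x^j\mid\hat{l}(u_1)=l_1,\ldots,\hat{l}(u_i)=l_i)$, show it is nonincreasing in $i$ via the averaging identity and the minimization in \eqref{defli}, start from the bound $<1$ established in the proof of Theorem~\ref{thmbi}, and conclude by the $0$--$1$ dichotomy of the fully conditioned probabilities. Your explicit remarks on (C1) and (C3) are a small, harmless addition beyond what the paper writes out.
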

\begin{proof}
By \eqref{defli}, for each $i\in[|\cU|]$, \begin{equation*}
\begin{split}& \sum_{\substack{x\in\hat{S} \\
j \in [m]}}\Pr(B_x^j~|~\hat{l}(u_1)=l_1, \ldots, \hat{l}(u_i)=l_i)\leq \\ & \sum_{k\in[m]}\frac{1}{m}\sum_{\substack{x\in\hat{S} \\
j \in [m]}}\Pr(B_x^j~|~\hat{l}(u_1)=l_1, \ldots, \hat{l}(u_i)=k)= \\
& \sum_{\substack{x\in\hat{S} \\
j \in [m]}}\sum_{k\in[m]}\frac{1}{m}\Pr(B_x^j~|~\hat{l}(u_1)=l_1, \ldots, \hat{l}(u_i)=k)\leq 
\\
& \sum_{\substack{x\in\hat{S} \\
j \in [m]}}\Pr(B_x^j~|~\hat{l}(u_1)=l_1, \ldots, \hat{l}(u_{i-1})=l_{i-1})\textrm{.} 
\end{split}
\end{equation*}
Hence, inductively,
\begin{equation}
\label{bom}
\begin{split}
\sum_{\substack{x\in\hat{S} \\
j \in [m]}}\Pr(B_x^j~|~\hat{l}(u_1)=l_1,& \ldots, \hat{l}(u_{|\cU|})=l_{|\cU|}) \\ \leq\sum_{\substack{x\in\hat{S} \\
j \in [m]}}\Pr(B_x^j)<1\textrm{,}
\end{split}
\end{equation} where the last inequality holds by the proof of Theorem \ref{thmbi}. On the other hand, $l$ is entirely defined by $\hat{l}(u_1), \ldots, \hat{l}(u_{|\cU|})$. Hence, $\Pr(B_x^j~|~\hat{l}(u_1)=l_1,  \ldots, \hat{l}(u_{|\cU|})=l_{|\cU|})$ equals either $0$ or $1$ for each $x\in\hat{S}$, $j\in[m]$. By \eqref{bom}, we thus have $\Pr(B_x^j~|~\hat{l}(u_1)=l_1,  \ldots, \hat{l}(u_{|\cU|})=l_{|\cU|})=0$ for all $x\in\hat{S}$, $j\in[m]$, i.e., $G_{\hat{S}}$ and $l$ satisfy the conditions of the ISLP.
\end{proof}
\begin{proposition}
Algorithm \ref{alg} can be performed in $O(|E_{\hat{S}}|+|\cU|m|\hat{S}|)$ operations.
\end{proposition}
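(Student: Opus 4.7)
The plan is to implement \eqref{defli} by maintaining enough auxiliary data that each candidate $l'\in[m]$ can be scored in time proportional to the number of edges ``touched'' by $u_i$, rather than by rescanning $\hat{S}\times[m]$ from scratch.

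The preliminary step is to rewrite the objective in closed form. For each $x\in\hat{S}$, after step $i$ let $F_x^i=\{\hat{l}(u_s)~|~s\leq i,\,(x,x+u_s)\in E_{\hat{S}}\}\subseteq[m]$ be the labels already placed on outgoing edges of $x$, and let $d_x^i$ count the outgoing edges of $x$ in $G_{\hat{S}}$ whose displacement has not yet been labeled. Since the labels of unprocessed displacements are uniform and independent in the probabilistic construction from Theorem~\ref{thmbi}, one has $\Pr(B_x^j~|~\hat{l}(u_1)=l_1,\ldots,\hat{l}(u_i)=l_i)=(1-1/m)^{d_x^i}$ when $j\notin F_x^i$ and $0$ otherwise. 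Summing over $j$ collapses the inner sum and yields
\[\sum_{\substack{x\in\hat{S}\\j\in[m]}}\Pr(B_x^j~|~\ldots)=\sum_{x\in\hat{S}}(m-|F_x^i|)(1-1/m)^{d_x^i}.\]

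Next, I would precompute in a single sweep over $E_{\hat{S}}$ the lists $V_u=\{x\in\hat{S}~|~(x,x+u)\in E_{\hat{S}}\}$ for each $u\in\cU$, costing $O(|E_{\hat{S}}|)$. For each vertex $x$ I would allocate a length-$m$ bit array for $F_x$ (so $l'\in F_x$ can be tested in $O(1)$), together with scalars $d_x$ and $T_x:=(m-|F_x|)(1-1/m)^{d_x}$; this initialization costs $O(m|\hat{S}|)$. A running global sum $S=\sum_{x\in\hat{S}}T_x$ is kept in a single variable. At step $i$, only vertices in $V_{u_i}$ change their $T_x$ under any choice of $l'$, so for each $l'\in[m]$ the induced change in $S$ can be computed in $O(|V_{u_i}|)$ by iterating over $V_{u_i}$; selecting the minimizer $l_i$ and then updating $F_x,d_x,T_x,S$ for $x\in V_{u_i}$ costs another $O(|V_{u_i}|)$.

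Finally, step $i$ thus costs $O(m|V_{u_i}|)$; since every directed edge of $E_{\hat{S}}$ has a unique displacement and so is counted in exactly one $V_{u_i}$, $\sum_{i=1}^{|\cU|}|V_{u_i}|=|E_{\hat{S}}|$. The main loop totals $O(m|E_{\hat{S}}|)$, which, combined with the $O(|E_{\hat{S}}|+m|\hat{S}|)$ preprocessing and the bound $m|E_{\hat{S}}|\leq|\cU|\cdot m|\hat{S}|$, gives the claimed $O(|E_{\hat{S}}|+|\cU|m|\hat{S}|)$. I expect the only genuinely substantive step to be the collapse of the inner sum over $j$ into a per-vertex quantity; once that formula is in hand, the complexity accounting is a routine amortized-sum argument.
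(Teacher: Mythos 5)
Your argument is correct and follows the same essential route as the paper's proof: both rest on the closed-form expression for $\Pr(B_x^j\mid\hat{l}(u_1)=l_1,\ldots,\hat{l}(u_i)=l_i)$ (zero if label $j$ already appears on an edge out of $x$, and $(1-1/m)^{d_x^i}$ otherwise) together with an $O(|E_{\hat{S}}|)$ precomputation of which displacements keep each vertex inside $\hat{S}$. Your bookkeeping is slightly finer --- collapsing the sum over $j$ to $(m-|F_x^i|)(1-1/m)^{d_x^i}$ and touching only the vertices in $V_{u_i}$ at step $i$ --- which in fact yields the sharper main-loop bound $O(m|E_{\hat{S}}|)$, whereas the paper simply updates all $m|\hat{S}|$ conditional probabilities at every step; both land within the stated $O(|E_{\hat{S}}|+|\cU|m|\hat{S}|)$.
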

\begin{proof}
Clearly, the computational complexity of Algorithm \ref{alg} depends on the complexity of solving the optimization problem in \eqref{defli} for each $i\in[|\cU|]$. For each $i\in[|\cU|]$, $x\in\hat{S}$, and $j\in[m]$, if $j=l_k$ for some $k\in\{1,\ldots,i\}$ and $x+u_k\in\hat{S}$, then $\Pr(B_x^j~|~\hat{l}(u_1)=l_1,  \ldots, \hat{l}(u_i)=l_i)=0$. Otherwise, \begin{equation*}
\begin{split}\Pr(B_x^j~|~\hat{l}(u_1)=l_1, & \ldots, \hat{l}(u_i)=l_i)= \\
& (1-1/m)^{|\{i<k\leq |\cU|~|~x+u_k\in\hat{S}\}|}\textrm{.}
\end{split}
\end{equation*}
Thus, if we precompute whether $x+u_k\in\hat{S}$ for each $x\in\hat{S}$ and $k\in[|\cU|]$, and all values $|\{i<k\leq |\cU|~|~x+u_k\in\hat{S}\}|$, which can be performed in $O(E_{\hat{S}})$ operations, computing \eqref{defli} can be performed in $m|\hat{S}|$ time for each $i\in\{1,\ldots,|\cU|\}$, by merely updating all $\Pr(B_x^j~|~\hat{l}(u_1)=l_1,  \ldots, \hat{l}(u_i)=l_i)$ at the end of step $i$. Hence, Algorithm \ref{alg} indeed operates in $O(E_{\hat{S}}+|\cU|m|\hat{S}|)$ time.
\end{proof}

Provided with a labeling $l:E_{\tilde{S}}\to[m]$, given in Algorithm \ref{alg}, the system design and control policy which solve the FPCP are given by \eqref{defur1}--\eqref{defur2} and \eqref{defu}. We now proceed to illustrate the obtained results on two practical scenarios.

\section{Examples}
\label{exams}

\subsection{Damaged Vehicle}
Having given conditions for solvability of the RPCP and the FPCP, we return to our running example. Let us consider a vehicle operating on $V=\ZZ^n$, with the ability to either move along the coordinate axes or stay in place, i.e., $\cU=\{0,\pm e_1,\ldots,\pm e_n\}$. Naturally, only $n\leq 3$ makes direct physical sense. A similar example has been considered in the context of safety games in \cite{NelTop16}. However, in that paper the agent and the adversary alternate in taking control of the vehicle, and the focus of the paper was on efficient computation of safe control policies for a given system design, and not on determining a good system design.

The safety objective that we consider is that the vehicle remains close to its initial position $x(0)=0$, i.e., $S=\{x~|~\|x\|_\infty\leq k\}$ for some $k\in\NN_0$. As we showed in Example \ref{exrun}, there exists a safe system design for $n=2$, $m=2$, and $k=1$. In this section, we are interested in discussing the maximal loss of control that still enables a safe system design, i.e., for a given $n$ and $k$, the maximal $m$ such that the FPCP admits a solution.

It is clear that if $k=0$, the agent cannot afford any loss of authority, i.e., the only acceptable $m$ equals $1$. If $k\geq 1$, we claim that the maximal $m$ equals $n+1$. 

Let us first show that the FPCP has a solution for $m=n+1$. A partition $\{U_1,\ldots,U_{n+1}\}$ that admits a solution to the RPCP is given by $U_i=\{e_i,-e_i\}$ for $i\leq n$, and $U_{n+1}=\{0\}$. Indeed, analogously to the construction on the right side of Fig.~\ref{runn}, a control policy which alternately chooses $e_d$ and $-e_d$ every time the adversary chooses input $d\in [n]$, and $0$ if the adversary chooses $d=n+1$, results in the agent's state always remaining in $\hat{S}=\{x~|~\|x\|_\infty\leq 1\}$.

On the other hand, if $m\geq n+2$, since there is a total of $2n$ non-zero elements in $\cU$, for any partition $\{U_1,\ldots,U_m\}$, some partition element $U_j$ will equal $\{e_i\}$ or $\{-e_i\}$ for some $i$. However, by then repeatedly choosing $d(t)=j$, the adversary can be assured that $\|x(t)\|_\infty=t$, i.e., $\|x\|_\infty>k$ after finitely many steps. Thus, the maximal value of $m$ for which the FPCP admits a solution is indeed $n+1$.

If $m=n+1$ and $\hat{S}=S=\{x~|~\|x\|_\infty\leq 1\}$, sufficient condition \eqref{bou} from Theorem \ref{thmbi} does not hold, as $\mindeg(G_{\hat{S}})=m<m\ln(m|\hat{S}|)$. Nonetheless, the solution to the FPCP exists. Let us briefly discuss this gap between sufficiency and necessity of condition \eqref{bou}. The proof of Theorem \ref{thmbi} relies on some degree of genericity of a correct labeling, i.e., a positive probability that a randomly chosen labeling will be correct. On the other hand, the solution to the FPCP when $m=n+1$ is highly structured. Namely, each element of $\{U_1,\ldots,U_m\}$ needs to equal $\{0\}$ or $\{e_i,-e_i\}$ for some $i$. Otherwise, there will exist $U_j$ that equals $\{e_i\}$ or $\{-e_i\}$ for some $i$, and by repeating $d(t)=j$, the adversary will be able to force the system state to move arbitrarily far away from $x(0)$. Hence, the partition that yields a solution to the RPCP is in fact unique up to a permutation: $U(i)=\{e_{i},-e_{i}\}$ for all $i\leq n$, and $U(n+1)=\{0\}$. Thus, as $n$ increases, the probability of a uniformly chosen partition yielding a solution to the RPCP tends to $0$.

\subsection{Communication over a Channel}
We now move from the setting of damaged autonomous systems to that of user-responsive systems. Consider the framework --- originally introduced in \cite{Sha48} --- where, at every time $t$, a message chosen from some finite message set $\cM$, $|\cM|=m$, is sent over a communication channel. Each message is encoded as a bit-string (i.e., {\em codeword}) of some fixed length $n$. This codeword does not need to be the same every time the same message is sent; there could be multiple ways to communicate the same message. However, two different messages cannot be encoded in the same way.

The {\em running digital sum} (RDS) $x(t)$ is defined as the vector consisting of differences in the number of $1$'s and $0$'s that were sent in each coordinate of the bit-string until time $t$. Thus, $x(t)$ satisfies \eqref{dtd} for $x(0)=0$, where $u(t)$ is an encoding of the message passed at time $t$, with zeros in the bit-string replaced by $-1$'s, and $\cU=\{-1,1\}^n$ \cite{CohLit91}. An illustration of such a system for $n=2$ is given in  Fig.~\ref{fig}.

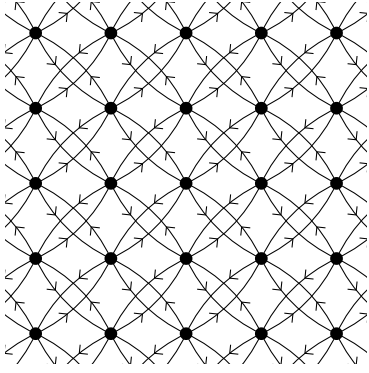
\begin{figure}[ht]
\centering
\begin{tikzpicture}[scale=1]
\clip (-2.4,-2.4) rectangle (2.4,2.4);
\tikzstyle{point}=[thick,fill=black, draw=black,circle,inner sep=0pt,minimum width=4pt,minimum height=4pt]
\tikzset{->-/.style={decoration={
  markings,
  mark=at position .35 with {\arrow{Straight Barb[]}}},postaction={decorate}}}
\tikzset{-<-/.style={decoration={
  markings,
  mark=at position .35 with {\arrow{Straight Barb[reversed]}}},postaction={decorate}}}

    \foreach \x in {-3,...,3}
    \foreach \y in {-3,...,3}
    {
    \node[point] at (\x,\y) {};
    \draw[->-] (\x,\y) arc(-45-20:-45+20:2.07);
    \draw[->-] (\x,\y) arc(45-20:45+20:2.07);
        \draw[->-] (\x,\y) arc(135-20:135+20:2.07);
    \draw[->-] (\x,\y) arc(225-20:225+20:2.07);  
    }
\end{tikzpicture}
\caption{An illustration of the dynamical system that describes the RDS. The vertices of $G$ and the corresponding directed edges of $E$ are denoted in black.}
\label{fig}
\end{figure}

Encoding policies for which the RDS in a channel remains small regardless of the passed messages naturally reduce the effects of various categories of noise \cite{CohLit91}, \cite{Sch04}. Since encodings of different messages are pairwise disjoint, the problem of constructing encoding policies with bounded RDS can be naturally interpreted as the FPCP, with the safe set $S=\{x~|~\|x\|_\infty\leq k\}$. In this section, we are primarily interested in finding the smallest codeword length $n$ such that there exists an encoding policy for which the RDS remains within $S$.

For $k=0$, there clearly does not exist $n$ which yields a solution for the RPCP. For $k=1$, the only $m$ for which there exists an $n$ which yields a solution for the RPCP is $m=1$, and in that case $n=1$ suffices. For $k\geq 2$, a bound on $n$ can be obtained from Theorem \ref{thmbi} as follows. 

\begin{proposition}
\label{line}
Let $m,n\in\NN$, $\cU=\{-1,1\}^n$, and $x(0)=0$. Then, if $n\geq 3\max(\log_2 m,11)$, the FPCP admits a solution for $S=\{x\in\ZZ^n~|~\|x\|_\infty\leq 2\}$.
\end{proposition}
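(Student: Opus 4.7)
The strategy is to apply Theorem~\ref{thmbi}: the problem reduces to exhibiting a finite induced subgraph $G_{\hat{S}}\subseteq G$ with $0\in\hat{S}\subseteq S$ and $\mindeg(G_{\hat{S}})\geq m\ln(m|\hat{S}|)$. The key structural feature to exploit is a parity constraint: since $\cU=\{-1,1\}^n$, every step flips the parity of every coordinate, so from $x(0)=0$ the state alternates between $\{-1,1\}^n$ (odd times) and $\{-2,0,2\}^n$ (even times). The naive candidate $\hat{S}=\{-1,1\}^n\cup\{-2,0,2\}^n$ fails because a ``corner'' $y\in\{\pm 2\}^n$ has only one $u\in\{-1,1\}^n$ for which $y+u$ stays in $S$, so $\mindeg(G_{\hat{S}})=1$.

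To balance the outdegrees on the two shells, I would set $K:=\lfloor n/3\rfloor$, write $k(y):=|\{i:y_i\neq 0\}|$, and take
\begin{equation*}
\hat{S}:=\{-1,1\}^n\ \cup\ \{y\in\{-2,0,2\}^n:k(y)\leq K\}.
\end{equation*}
Then $0\in\hat{S}\subseteq S$. The two outdegree computations are: for $x\in\{-1,1\}^n$, each neighbour satisfies $(x+u)_i\neq 0$ iff $u_i=x_i$, so $x+u\in\hat{S}$ iff $|\{i:u_i=x_i\}|\leq K$, giving outdegree $\sum_{j=0}^{K}\binom{n}{j}$; for $y$ in the even part with $k(y)=k$, every neighbour has all-odd coordinates, so it lies in $\hat{S}$ iff $y+u\in\{-1,1\}^n$, which forces one specific $u_i$ on each of the $k$ nonzero coordinates of $y$ and leaves both $u_i=\pm 1$ free on the remaining $n-k$ coordinates, giving outdegree $2^{n-k}\geq 2^{n-K}=2^{\lceil 2n/3\rceil}$.

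It remains to verify $\min\bigl(\sum_{j=0}^{K}\binom{n}{j},\,2^{n-K}\bigr)\geq m\ln(m|\hat{S}|)$. A Stirling estimate shows $\binom{n}{\lfloor n/3\rfloor}\geq 2^{2n/3}$ for all $n\geq 33$ (indeed well earlier), so the minimum equals $2^{n-K}\geq 2^{\lceil 2n/3\rceil}$. For the size, the crude bound $|\hat{S}|\leq 2^n+3^n\leq 2\cdot 3^n$ combined with $\log_2 m\leq n/3$ yields
\begin{equation*}
m\ln(m|\hat{S}|)\leq 2^{n/3}\bigl(\tfrac{n}{3}\ln 2+n\ln 3+\ln 2\bigr)\leq Cn\cdot 2^{n/3}
\end{equation*}
for an absolute constant $C$, and the sufficient condition $2^{2n/3}\geq Cn\cdot 2^{n/3}$ collapses to $2^{n/3}\geq Cn$, which is comfortably satisfied for $n\geq 33$. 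Theorem~\ref{thmbi} then yields the required labeling, which produces a solution to the FPCP via Theorem~\ref{propo1}.

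The main obstacle is identifying $\hat{S}$: the parity-induced alternation between the odd shell $\{-1,1\}^n$ and the even shell $\{-2,0,2\}^n$ places asymmetric demands on the outdegrees from the two sides, and one has to restrict the even shell to vertices with few nonzero coordinates in order to balance them. The threshold $K=\lfloor n/3\rfloor$ is what simultaneously places both $\sum_{j\leq K}\binom{n}{j}$ and $2^{n-K}$ at order $2^{2n/3}$, which is precisely the order that the hypothesis $n\geq 3\max(\log_2 m,11)$ is calibrated to absorb.
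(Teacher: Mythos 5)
Your proposal is correct and follows essentially the same route as the paper: apply Theorem~\ref{thmbi} to a two-shell set $\hat{S}$ consisting of the odd shell $\{-1,1\}^n$ together with the elements of $\{-2,0,2\}^n$ having few nonzero coordinates, then balance the two outdegree counts against $m\ln(m|\hat{S}|)$. The only difference is the cutoff (the paper allows up to $n/2$ nonzero coordinates in the even shell, giving $\mindeg\geq 2^{n/2}$, whereas your $K=\lfloor n/3\rfloor$ gives $2^{2n/3}$), a parameter choice that makes your final inequality slacker but does not change the argument.
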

\begin{proof}
Let us define $\hat{S}=V_1\cup V_2$, where $V_1=\{-1,1\}^n$, and $V_2=\{(x_1,\ldots,x_n)\in\{-2,0,2\}^n~|~x_i=0 \textrm{ for at least } n/2\textrm{ } i\textrm{'s}\}$. We note that $x(0)\in\hat{S}\subseteq S$. 

Let us examine the outgoing degree $\deg_{G_{\hat{S}}}(v)$ of every vertex $v\in\hat{S}$ in the induced subgraph $G_{\hat{S}}\subseteq G$. If $v=(v_1,v_2,\ldots,v_n)\in V_1$, then 
\begin{equation}
\label{deg1}
\deg_{G_{\hat{S}}}(v)=\sum_{i\geq n/2}\binom{n}{i}\geq 2^{n-1}\textrm{,}
\end{equation}
as the set of neighbors of $v$ is given by all vertices $\overline{v}=(\overline{v}_1,\ldots,\overline{v}_n)\in\ZZ^n$ that satisfy (i) $\overline{v}_i\in\{0,2v_i\}$ for all $i\in[n]$, and (ii) $\overline{v}_i=0$ for at least $n/2$ $i$'s. If $v\in V_2$, then
\begin{equation}
\label{deg2}\deg_{G_{\hat{S}}}(v)\geq 2^{n/2}\textrm{,}
\end{equation}
as the set of neighbors of $v$ is given by all $\overline{v}\in\ZZ^n$ that satisfy $\overline{v}_i\in\{-1,1\}$ if $v_i=0$, and $\overline{v}_i=v_i/2$ otherwise. Thus, from \eqref{deg1} and \eqref{deg2}, we obtain $\mindeg(G_{\hat{S}})\geq 2^{n/2}$.

We note that $|\hat{S}|=|V_1|+|V_2|\leq 2^n+3^n\leq 3^{n+1}$. Thus, $m\ln(m|\hat{S}|)\leq m\ln m+m(n+1)\ln 3\leq n2^{n/3}\ln(2)/3+(n+1)2^{n/3}\ln (3)$. It can be shown that $n2^{n/3}\ln(2)/3+(n+1)2^{n/3}\ln (3)\leq 2^{n/2}$ for all $n\geq 33$. Thus, the conditions of Theorem \ref{thmbi} are satisfied.
\end{proof}
An illustration of the construction of $\tilde{S}$ used in the proof of Proposition \ref{line} is given in Fig.~\ref{figlin}, for $m=n=2$. We note that Fig.~\ref{figlin} shows that it is possible to construct a labeling (i.e., partition $\{U_1,U_2\}$) even for $n=2$, indicating that the bound in Proposition \ref{line} is very liberal.

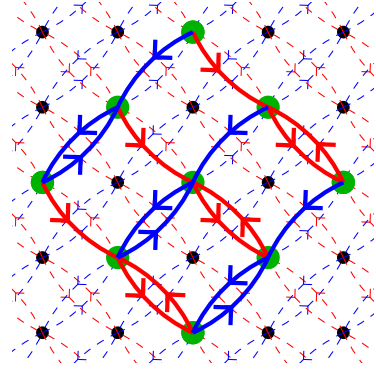
\begin{figure}[ht]
\centering
\begin{tikzpicture}[scale=1]
\clip (-2.4,-2.4) rectangle (2.4,2.4);
\tikzstyle{point}=[thick,fill=black, draw=black,circle,inner sep=0pt,minimum width=8pt,minimum height=8pt]
\tikzset{->-/.style={decoration={
  markings,
  mark=at position .45 with {\arrow{Straight Barb[]}}},postaction={decorate}}}
\tikzset{-<-/.style={decoration={
  markings,
  mark=at position .45 with {\arrow{Straight Barb[reversed]}}},postaction={decorate}}}

    \foreach \x in {-3,...,3}
    \foreach \y in {-3,...,3}
    {
    \node[point,minimum width=4pt,minimum height=4pt] at (\x,\y) {};
    \draw[->-,blue,dashed] (\x,\y) arc(-45-20:-45+20:2.07);
    \draw[->-,red,dashed] (\x,\y) arc(45-20:45+20:2.07);
        \draw[->-,blue,dashed] (\x,\y) arc(135-20:135+20:2.07);
    \draw[->-,red,dashed] (\x,\y) arc(225-20:225+20:2.07);  
    }
    \foreach \x in {0,...,2}
    {
    \node[point,green!70!black] at (\x,2-\x) {};
    \node[point,green!70!black] at (-\x,2-\x) {};
    \node[point,green!70!black] at (\x,-2+\x) {};  
    \node[point,green!70!black] at (-\x,-2+\x) {};
    }
    \node[point,green!70!black] at (0,0) {};
    
     \draw[->-,red, ultra thick] (0,2) arc(225-20:225+20:2.07);
     \draw[->-,red, ultra thick] (1,1) arc(225-20:225+20:2.07);
     \draw[->-,red, ultra thick] (-1,1) arc(225-20:225+20:2.07);
     \draw[->-,red, ultra thick] (0,0) arc(225-20:225+20:2.07);
     \draw[->-,red, ultra thick] (-2,0) arc(225-20:225+20:2.07);
     \draw[->-,red, ultra thick] (-1,-1) arc(225-20:225+20:2.07);
     \draw[->-,red, ultra thick] (0,-2) arc(45-20:45+20:2.07);
  	\draw[->-,red, ultra thick] (1,-1) arc(45-20:45+20:2.07);
	\draw[->-,red, ultra thick] (2,0) arc(45-20:45+20:2.07);
\draw[->-,blue, ultra thick] (0,2) arc(135-20:135+20:2.07);    
\draw[->-,blue, ultra thick] (-1,1) arc(135-20:135+20:2.07);    
\draw[->-,blue, ultra thick] (1,1) arc(135-20:135+20:2.07);    
\draw[->-,blue, ultra thick] (0,0) arc(135-20:135+20:2.07);     
\draw[->-,blue, ultra thick] (2,0) arc(135-20:135+20:2.07);    
\draw[->-,blue, ultra thick] (1,-1) arc(135-20:135+20:2.07);    
\draw[->-,blue, ultra thick] (-2,0) arc(315-20:315+20:2.07);      
\draw[->-,blue, ultra thick] (-1,-1) arc(315-20:315+20:2.07);     
\draw[->-,blue, ultra thick] (0,-2) arc(315-20:315+20:2.07);    

\end{tikzpicture}
\caption{An illustration of a safe labeling for the RDS with $m=2$, $n=2$. Set $\hat{S}$ is denoted in green. A control policy on $\hat{S}$ that ensures safety is described by thicker arrows. We note that each element in $\hat{S}$ has an outgoing thick arrow in each color pointing into $\hat{S}$. Hence, the system state controlled by such a law will always remain within $\hat{S}$, for any $x(0)\in\hat{S}$.}
\label{figlin}
\end{figure}

As it is necessary to use codewords (i.e., bit-strings) of length at least $\lceil\log_2 m\rceil$ to distinguish between $m$ different messages, Proposition \ref{line} states that, if we use three times as many bits as necessary, we can ensure that the RDS stays within the smallest possible bounds. We remark that from the proof of Proposition \ref{line} it is clear that $n\geq 3\max(\log_2 m, 11)$ can be replaced by $n\geq (2+\varepsilon)\max(\log_2 m, n_\varepsilon)$ for any $\varepsilon\geq 0$, where $n_\varepsilon\to\infty$ as $\varepsilon\to 0$.

\section{Conclusion and Future Work}
\label{conc}

This paper presents a preliminary discussion on control, design, and motion planning abilities of an autonomous system where the controller experienced a partial loss of control authority. The paper is primarily interested in developing sufficient and necessary conditions for existence of a safe control policy in such a partly controlled system. In order to obtain these conditions, we interpreted the system motion as a variant of an adversarial safety game on a graph, where one of the player's moves is to label the edges of the game graph. We showed that the safety objective in the original control system is attainable if and only if such a game has a winning strategy, and showed that the game has a winning strategy if and only if there exists a labeling of the game graph that satisfies particular properties. We found a sufficient condition and a necessary condition for the existence of such a labeling in terms of minimal degrees of a subgraph of the original graph, and discussed how those conditions apply to the motion of an autonomous vehicle operating on an $n$-dimensional surface and to communication using a set of codewords of length $n$ with a bounded running digital sum.

The primary avenue of future work is in broadening the scope of the considered framework. In addition to discussing system dynamics more general than \eqref{dtd} --- which may be achieved by considering two-stage motions on a graph, one stage being involuntary ("drift"), and the other resulting from the performed actions --- it is meaningful to consider a broader class of control specifications, rather than solely safety. In general, tasks for autonomous systems are often expressed by a temporal logic specification (e.g., ``visit area $A$ infinitely many times, never go into area $B$, and eventually reach area $C$''). Previous work on designing provably correct control policies --- i.e., policies that are guaranteed to result in the system behavior satisfying a temporal logic specification --- primarily deals with systems whose control abilities are not compromised; see \cite{BaiKat08} for a thorough study. While there is a substantial body of work (see, e.g., \cite{Kupetal01} and the references therein) on systems whose control originally introduced in \cite{Sha48}, may depend on the environment, procedures for determining provably correct control policies for such systems are computationally complex. Providing simple graph-based criteria for existence of a system design that admits a correct control policy would present a significant next step towards ensuring system resilience under partial loss of control authority.

\bibliographystyle{IEEEtran}
\bibliography{refs}

\begin{thebibliography}{10}
\providecommand{\url}[1]{#1}
\csname url@samestyle\endcsname
\providecommand{\newblock}{\relax}
\providecommand{\bibinfo}[2]{#2}
\providecommand{\BIBentrySTDinterwordspacing}{\spaceskip=0pt\relax}
\providecommand{\BIBentryALTinterwordstretchfactor}{4}
\providecommand{\BIBentryALTinterwordspacing}{\spaceskip=\fontdimen2\font plus
\BIBentryALTinterwordstretchfactor\fontdimen3\font minus
  \fontdimen4\font\relax}
\providecommand{\BIBforeignlanguage}[2]{{%
\expandafter\ifx\csname l@#1\endcsname\relax
\typeout{** WARNING: IEEEtran.bst: No hyphenation pattern has been}%
\typeout{** loaded for the language `#1'. Using the pattern for}%
\typeout{** the default language instead.}%
\else
\language=\csname l@#1\endcsname
\fi
#2}}
\providecommand{\BIBdecl}{\relax}
\BIBdecl

\bibitem{RatSen04}
S.~Rathinam and R.~Sengupta, ``A safe flight algorithm for unmanned aerial
  vehicles,'' in \emph{IEEE Aerospace Conference}, 2004, pp. 3025--3031.

\bibitem{Chaetal18}
K.~Chatzilygeroudis, V.~Vassiliades, and J.-B. Mouret, ``Reset-free
  trial-and-error learning for robot damage recovery,'' \emph{Robotics and
  Autonomous Systems}, vol. 100, pp. 236--250, 2018.

\bibitem{Wasetal99}
R.~Washington, K.~Golden, J.~Bresina, D.~E. Smith, C.~Anderson, and T.~Smith,
  ``Autonomous rovers for {Mars} exploration,'' in \emph{IEEE Aerospace
  Conference}, 1999, pp. 237--251.

\bibitem{Vat01}
M.~A. Vatis, ``Cyber attacks during the war on terrorism: A predictive
  analysis,'' Institute for Security, Technology, and Society, Dartmouth
  College, Tech. Rep., 2001.

\bibitem{AmiGia12}
S.~M. Amin and A.~M. Giacomoni, ``Smart grid --- safe, secure, self-healing,''
  \emph{IEEE Power and Energy Magazine}, vol.~10, no.~1, pp. 33--40, 2012.

\bibitem{Zhuetal14}
Y.~Zhu, J.~Yan, Y.~Tang, Y.~L. Sun, and H.~He, ``Resilience analysis of power
  grids under the sequential attack,'' \emph{IEEE Transactions on Information
  Forensics and Security}, vol.~9, no.~12, pp. 2340--2354, 2014.

\bibitem{Woletal14}
M.~Wolf, M.~Minzlaff, and M.~Moser, ``Information technology security threats
  to modern e-enabled aircraft: A cautionary note,'' \emph{Journal of Aerospace
  Information Systems}, vol.~11, no.~7, pp. 447--457, 2014.

\bibitem{HarGil16}
K.~Hartmann and K.~Giles, ``{UAV} exploitation: A new domain for cyber power,''
  in \emph{8th International Conference on Cyber Conflict}, 2016, pp. 205--221.

\bibitem{Feietal90}
D.~G. Feitelson and L.~Rudolph, ``Distributed hierarchical control for parallel
  processing,'' \emph{Computer}, vol.~23, no.~5, pp. 65--77, 1990.

\bibitem{Rodetal12}
P.~Rodriguez-Mier, M.~Mucientes, and M.~Lama, ``A dynamic {QoS}-aware semantic
  web service composition algorithms,'' in \emph{10th International Conference
  on Service-Oriented Computing}, 2012, pp. 623--630.

\bibitem{Sha48}
C.~E. Shannon, ``A mathematical theory of communication,'' \emph{The Bell
  System Technical Journal}, vol.~27, no.~3, pp. 379--423, 1948.

\bibitem{Dow09}
J.~Downer, ``When failure is an option: Redundancy, reliability and regulation
  in complex technical systems,'' Centre for Analysis of Risk and Regulation,
  London School of Economics and Political Science, Tech. Rep.~53, 2009.

\bibitem{TomLyg00}
C.~J. Tomlin and J.~Lygeros, ``A game theoretic approach to controller design
  for hybrid systems,'' \emph{Proceedings of the IEEE}, vol.~88, no.~7, pp.
  949--970, 2000.

\bibitem{Tab09}
P.~Tabuada, \emph{Verification and Control of Hybrid Systems: A Symbolic
  Approach}.\hskip 1em plus 0.5em minus 0.4em\relax Springer, 2009.

\bibitem{Beretal02}
J.~Bernet, D.~Janin, and I.~Walukiewicz, ``Permissive strategies: From parity
  games to safety games,'' \emph{Theoretical Informatics and Applications},
  vol.~36, pp. 261--275, 2002.

\bibitem{DoyRas11}
L.~Doyen and J.-F. Raskin, ``Games with imperfect information: Theory and
  algorithms,'' in \emph{Lectures in Game Theory for Computer Scientists},
  K.~R. Apt and E.~Gr\"{a}del, Eds.\hskip 1em plus 0.5em minus 0.4em\relax
  Cambridge University Press, 2011, pp. 185--212.

\bibitem{NelTop16}
D.~Nelder and U.~Topcu, ``An automaton learning approach to solving safety
  games over infinite graphs,'' in \emph{International Conference on Tools and
  Algorithms for the Construction and Analysis of Systems}, 2016, pp. 204--221.

\bibitem{AloSpe08}
N.~Alon and J.~H. Spencer, \emph{The probabilistic method}.\hskip 1em plus
  0.5em minus 0.4em\relax Wiley, 2008.

\bibitem{Yam97}
B.~Yamauchi, ``A frontier-based approach for autonomous exploration,'' in
  \emph{IEEE International Symposium on Computational Intelligence in Robotics
  and Automation}, 1997, pp. 146--151.

\bibitem{Megetal12}
N.~Megow, K.~Mehlhorn, and P.~Schweitzer, ``Online graph exploration: New
  results on old and new algorithms,'' \emph{Theoretical Computer Science},
  vol. 463, pp. 62--72, 2012.

\bibitem{Oswetal16}
S.~O{\ss}wald, M.~Bennewitz, W.~Burgard, and C.~Stachniss, ``Speeding-up robot
  exploration by exploiting background information,'' \emph{IEEE Robotics and
  Automation Letters}, vol.~1, no.~2, pp. 716--723, 2016.

\bibitem{CohLit91}
G.~D. Cohen and S.~Litsyn, ``{dc}-constrained error-correcting codes with small
  running digital sum,'' \emph{IEEE Transactions on Information Theory},
  vol.~37, no.~3, pp. 949--955, 1991.

\bibitem{Sghetal08}
M.~Sghairi, A.~{de Bonneval}, Y.~Crouzet, J.-J. Aubert, and P.~Brot,
  ``Challenges in building fault-tolerant flight control system for a civil
  aircraft,'' \emph{IAENG International Journal of Computer Science}, vol.~35,
  no.~4, 2008.

\bibitem{RenDob69}
B.~C. Rennie and A.~J. Dobson, ``On {Stirling} numbers of the second kind,''
  \emph{Journal of Combinatorial Theory}, vol.~7, no.~2, pp. 116--121, 1969.

\bibitem{Amietal12}
O.~Amini, D.~Peleg, S.~P\'{e}rennes, I.~Sau, and S.~Saurabh, ``On the
  approximability of some degree-constrained subgraph problems,''
  \emph{Discrete Applied Mathematics}, vol. 160, pp. 1661--1679, 2012.

\bibitem{Erd59}
P.~Erd\H{o}s, ``Graph theory and probability,'' \emph{Canadian Journal of
  Mathematics}, vol.~11, pp. 34--38, 1959.

\bibitem{Erd61}
------, ``Graph theory and probability. {II},'' \emph{Canadian Journal of
  Mathematics}, vol.~13, pp. 346--352, 1961.

\bibitem{Ven12}
S.~S. Venkatesh, \emph{The Theory of Probability: Explorations and
  Applications}.\hskip 1em plus 0.5em minus 0.4em\relax Cambridge University
  Press, 2012.

\bibitem{Mol12}
V.~H. Moll, \emph{Numbers and Functions: From a Classical-Experimental
  Mathematician's Point of View}.\hskip 1em plus 0.5em minus 0.4em\relax
  American Mathematical Society, 2012.

\bibitem{Sch04}
K.~A. {Schouhamer Immink}, \emph{Codes for Mass Data Storage Systems}.\hskip
  1em plus 0.5em minus 0.4em\relax Shannon Foundation Publishers, 2004.

\bibitem{BaiKat08}
C.~Baier and J.-P. Katoen, \emph{Principles of Model Checking}.\hskip 1em plus
  0.5em minus 0.4em\relax MIT Press, 2008.

\bibitem{Kupetal01}
O.~Kupferman, M.~Y. Vardi, and P.~Wolper, ``Module checking,''
  \emph{Information and Computation}, vol. 164, no.~2, pp. 322--344, 2001.

\end{thebibliography}

\end{document}